\newcommand{\ka}{k_{\mathrm{a}}}
\newcommand{\epsa}{\varepsilon_{\mathrm{a}}}
\newcommand{\epsb}{\varepsilon_{\mathrm{b}}}
\newcommand{\mub}{\mu_{\mathrm{b}}}
\newcommand{\mua}{\mu_{\mathrm{a}}}	
\newcommand{\sigmab}{\sigma_{\mathrm{b}}}
\newcommand{\set}[1]{\left\{#1\right\}}
\newcommand{\abs}[1]{\left|#1\right|}
\newcommand{\p}{\partial}
\newcommand{\rd}{\mathrm{d}}
\newcommand{\mx}{\mathbf{x}}
\newcommand{\mz}{\mathbf{z}}
\newcommand{\vn}{\boldsymbol{\nu}}
\newcommand{\vt}{\boldsymbol{\theta}}
\DeclareMathOperator*{\sing}{sing}
\DeclareMathOperator*{\reg}{reg}
\DeclareMathOperator*{\leng}{length}
\DeclareMathOperator*{\dist}{dist}
\theoremstyle{plain}
\newtheorem{theorem}{Theorem}[section]
\newtheorem{lemma}{Lemma}[section]
\theoremstyle{remark}
\newtheorem{example}{Example}[section]
\newtheorem{property}{Property}[section]
\begin{document}
\title{Topological derivative for a fast identification of short, linear perfectly conducting cracks with inaccurate background information}
\author{Won-Kwang Park}
\affil{Department of Information Security, Cryptology, and Mathematics, Kookmin University, Kookmin University, Seoul, 02707, Korea. Electronic address: parkwk@kookmin.ac.kr}
\date{}
\maketitle
\begin{abstract}
In this study, we consider a topological derivative-based imaging technique for the fast identification of short, linear perfectly conducting cracks completely embedded in a two-dimensional homogeneous domain with smooth boundary. Unlike conventional approaches, we assume that the background permittivity and permeability are unknown due to their dependence on frequency and temperature, and we propose a normalized imaging function to localize cracks. Despite inaccuracies in background parameters, application of the proposed imaging function enables to recognize the existence of crack but it is still impossible to identify accurate crack locations. Furthermore, the shift in crack localization of imaging results is significantly influenced by the applied background parameters. In order to theoretically explain this phenomenon, we show that the imaging function can be expressed in terms of the zero-order Bessel function of the first kind, the crack lengths, and the applied inaccurate background wavenumber corresponding to the applied inaccurate background permittivity and permeability. Various numerical simulations results with synthetic data polluted by random noise validate the theoretical results.
\end{abstract}

\section{Introduction}\label{sec:1}
Accurate and efficient identification of unknown defects from scattering phenomena by applying microwaves is of paramount importance in various engineering and physical applications, including non-destructive testing \cite{HRMGAC,PGBM,WWXHSZLF}, structural health monitoring \cite{CM2,HSM2,IV}, biomedical imaging \cite{A1,A2,CZBN}, and radar technology \cite{BMKS,C2,TKNN}. Due to the fundamental nonlinearity and ill-posedness of the problem, various iterative (quantitative) inversion techniques have been developed to address microwave imaging challenges. Examples include Newton-type shape reconstruction for arc-like cracks \cite{K}, Gauss–Newton method for breast cancer imaging \cite{RMMP}, Born iterative approach for brain stroke detection \cite{IBA}, level set method for retrieving shape of unknown scatterers \cite{DL}, Newton–Kantorovich method for arm and thorax imaging \cite{MJBB}, and optimal control approach for shape reconstruction of extended scatterers \cite{AGJKLY}. Although these iterative-based inversion techniques have been successfully applied to various real-world microwave imaging problems, several critical issues such as non-convergence phenomena, entrapment in local minima, and significant computational costs due to the large number of iterations, may arise if the iterative process is initiated without a good initial guess and prior information of unknown targets. Moreover, because iterative-based techniques are highly sensitive to measurement noise, this can lead to substantial errors in the reconstructed results so that appropriate regularization strategies must be adopted. Hence, for obtaining a good initial guess or prior information (location, shape, etc.) of unknown objects, various non-iterative (qualitative) inversion techniques have been investigated.

Among the many available non-iterative techniques, topological derivative-based imaging methods have drawn significant attention. Originally, this was originally investigated for the shape optimization problem \cite{ASSS,CR1,CGGM,EKS,NS,SZ}, but its application for fast identification of unknown objects in inverse scattering problem has been demonstrated successfully; for example, identification of small defects \cite{B1,BGN}, fast imaging of thin electromagnetic inhomogeneities \cite{P-TD3,P-TD1}, shape imaging of three dimensional obstacles \cite{LR1}, flaw detection in welding joints \cite{MR}, object localization from Fresnel experimental dataset \cite{CPR}, and infrared thermography for defect identification inside thin plates \cite{PR}. We also refer to \cite{AGJK,BHR,CDLR,CR3,HKO} for various remarkable results about the topological derivatives.

In general, the effectiveness of topological derivative methods hinges on the accurate knowledge of background material properties—specifically, the permittivity, permeability, and conductivity of the medium. However, in real-world applications, such parameters are often inaccurately known due to their sensitivity to environmental variations such as temperature and operating frequency. Related works can be found in \cite{BWFWH,C8,K7,MPFLP,P-Book} and we refer to Figure \ref{Parameters}. This leads to critical issues in practical implementations, where even slight deviations from true background values can inaccurate imaging result. However, to the best of our knowledge, a reliable mathematical theory has not been investigated to explain this phenomenon.

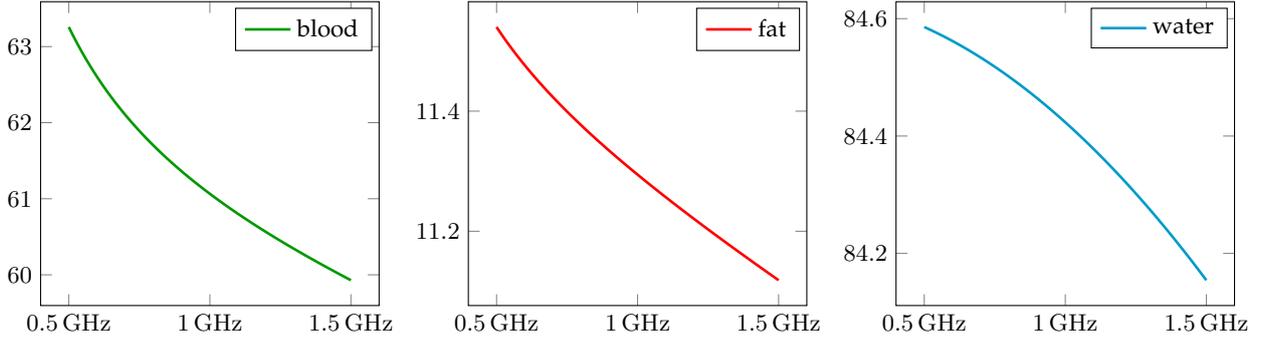
\begin{figure}[h]
\begin{center}
\begin{tikzpicture}
\small
\begin{axis}
[xtick={1,501,1001},
xticklabels={$\SI{0.5}{\giga\hertz}$,$\SI{1}{\giga\hertz}$,$\SI{1.5}{\giga\hertz}$},
width=.355\textwidth,
height=.33\textwidth,
legend cell align={left}]
\addplot[line width=1pt,solid,color=green!60!black] %
	table[x=x,y=y3,col sep=comma]{Permittivity.csv};
\addlegendentry{blood};
\end{axis}
\end{tikzpicture}
\begin{tikzpicture}
\small
\begin{axis}
[xtick={1,501,1001},
xticklabels={$\SI{0.5}{\giga\hertz}$,$\SI{1}{\giga\hertz}$,$\SI{1.5}{\giga\hertz}$},
width=.355\textwidth,
height=.33\textwidth,
legend cell align={left}]
\addplot[line width=1pt,solid,color=red] %
	table[x=x,y=y2,col sep=comma]{Permittivity.csv};
\addlegendentry{fat};
\end{axis}
\end{tikzpicture}
\begin{tikzpicture}
\small
\begin{axis}
[xtick={1,501,1001},
xticklabels={$\SI{0.5}{\giga\hertz}$,$\SI{1}{\giga\hertz}$,$\SI{1.5}{\giga\hertz}$},
width=.355\textwidth,
height=.33\textwidth,
legend cell align={left}]
\addplot[line width=1pt,solid,color=cyan!80!black] %
	table[x=x,y=y1,col sep=comma]{Permittivity.csv};
\addlegendentry{water};
\end{axis}
\end{tikzpicture}
\caption{\label{Parameters}Permittivity values of blood (left), fat (center), and water (right) between $f=\SI{0.5}{\giga\hertz}$ and $f=\SI{1.5}{\giga\hertz}$.}
\end{center}
\end{figure}

To address this issue, we consider the application of topological derivative for identifying a set of two-dimensional short, linear perfectly conducting cracks completely embedded in a homogeneous medium, even when accurate background permittivity or permeability values are unknown. To this end, we introduce a topological derivative based imaging function based on a misfit functional with inaccurate permittivity and permeability values. In order to explain some phenomena including the appearance of inaccurate location of cracks, we carefully investigate the structure of the imaging function by establishing its relationship with crack length, both accurate and inaccurate wavenumbers, and the zero-order Bessel function. This is based on the fact that the boundary measurement data can be expressed by the asymptotic expansion formula in the presence of small cracks. To validate the theoretical results,
numerical simulation results with boundary measurement data corrupted by random noise in the existence of single and multiple short, linear cracks are exhibited.

This paper is organized as follows: In Section \ref{sec:2}, we introduce the basic concept of the forward problem in the presence of a set of short, linear perfectly conducting cracks and design topological derivative based imaging function with inaccurate background wavenumber corresponding to the applied permittivity and permeability values. In Section \ref{sec:3}, we explore mathematical structure of the designed imaging function and discuss its various properties including the appearance of inaccurate results. In Section \ref{sec:4}, we  present numerical simulation results to validate the theoretical result. In Section \ref{sec:5}, a short conclusion including future directions is provided.

\section{Topological derivative based imaging}\label{sec:2}
Let $\Gamma_m$ be a linear perfectly conducting crack with length $2\ell_m$ and center $\mx_m$ and $\Gamma$ denotes the collection of $\Gamma_m$. Throughout this paper, all $\Gamma_m$ are completely embedded in a homogeneous material $\Omega\subset\mathbb{R}^2$ with smooth boundary $\p\Omega\in\mathcal{C}^2$, permittivity $\epsb$, and permeability $\mub$ at given angular frequency $\omega=2\pi f$. Throughout this paper, we assume that $\Omega$ is a subset of anechoic chamber; thus, $\mub=4\pi\times\SI{e-7}{\henry/\meter}$, $\epsb=\SI{8.854e-12}{\farad/\meter}$, and the background conductivity $\sigmab\approx0$. By denoting $k=\omega\sqrt{\epsb\mub}$ and $\lambda$ as the background wavenumber and given positive wavelength, respectively, we assume that
\begin{equation}\label{condition}
k|\mx_m-\mx_{m'}|\gg\frac14\quad\text{and}\quad\ell_m<\frac{\lambda}{2},
\end{equation}
for $m,m'=1,2,\ldots,M$ and $m\ne m'$. These conditions means that $\Gamma_m$ are sufficiently separated from each
other and the their lengths are short. Furthermore, we assume that $\Gamma_m$ does not touch the $\p\Omega$ i.e., there is a positive constant $C$ such that
\[\dist(\Gamma_m,\p\Omega)\geq C.\]

In the presence of $\Gamma$, let $u^{(n)}(\mx;k)$ be the time-harmonic total field satisfying the boundary value problem
\begin{equation}\label{ForwardProblem}
\left\{\begin{array}{rcl}
\medskip\triangle u^{(n)}(\mx;k)+k^2 u^{(n)}(\mx;k)=0&\mbox{in}&\Omega\backslash\overline{\Gamma}\\
u^{(n)}(\mx;k)=0&\mbox{on}&\Gamma\\
\noalign{\medskip}\displaystyle\frac{\p u^{(n)}(\mx;k)}{\p\vn(\mx)}=\frac{\p\exp(ik\vt_n\cdot\mx)}{\p\vn(\mx)}=g^{(n)}(\mx)\in L^2(\partial\Omega)&\mbox{on}&\p\Omega,\\
\end{array}\right.
\end{equation}
where $\vn(\mx)$ is the outward normal to $\mx\in\p\Omega$ and $\vt_n\in\mathbb{S}^1$, $n=1,2,\ldots,N$, denotes the propagation direction. We assume that $k^2$ is not an eigenvalue of \eqref{ForwardProblem} to guarantee the well-posedness. Analogously, let $w^{(n)}(\mx;k)=\exp(ik\vt_n\cdot\mx)$ be the background solution which satisfies \eqref{ForwardProblem} in the absence of $\Gamma$.

Now, let us consider the following misfit functional, which depends on the solution $u^{(n)}(\mx;k)$:
\begin{equation}\label{Energy}
  \mathbb{E}(\Omega)=\frac12\sum_{n=1}^{N}\int_{\p\Omega}|u^{(n)}(\mx;k)-w^{(n)}(\mx;k)|^2\rd\mx.
\end{equation}
In order to introduce the topological derivative based imaging function, let us create a small linear crack $\Sigma$ with length $\epsilon$ at $\mz\in\Omega\backslash\partial\Omega$ and denote $\Omega|\Sigma$ as the corresponding domain. Since the creation of $\Sigma$ changes the topology of the entire domain $\Omega$, it is natural to consider the corresponding topological derivative $d_T\mathbb{E}(\mz)$ of $\mathbb{E}(\Omega)$ with respect to point $\mz$ such that
\begin{equation}\label{TopDerivative}
d_T\mathbb{E}(\mz)=\lim_{\gamma\to0+}\frac{\mathbb{E}(\Omega|\Sigma) -\mathbb{E}(\Omega)}{\varphi(\gamma)},
\end{equation}
where $\varphi(\gamma)\longrightarrow0$ as $\gamma\longrightarrow0+$. From (\ref{TopDerivative}), we obtain the asymptotic expansion:
\begin{equation}\label{AsymptoticExpansionTopologicalDerivative}
  \mathbb{E}(\Omega|\Sigma)=\mathbb{E}(\Omega)+\varphi(\gamma)d_T\mathbb{E}(\mz)+o(\varphi(\gamma)).
\end{equation}
Based on \cite{P-TD3}, we can explore explicitly forms of $\varphi(\gamma)$ and $d_T\mathbb{E}(\mz)$ as follows.

\begin{lemma}Let $\mathbb{E}(\Omega)$ is defined as \eqref{Energy}. Then $\varphi(\gamma)$ and $d_T\mathbb{E}(\mz)$ are written as
\begin{equation}\label{TopologicalDerivative}
\varphi(\gamma)=\frac{2\pi}{\ln(\gamma/2)}\quad\text{and}\quad d_T\mathbb{E}(\mz)=\mathrm{Re}\sum_{n=1}^{N}v^{(n)}(\mz;k)\overline{w^{(n)}(\mz;k)},
\end{equation}
respectively. Here, $v^{(n)}(\mx;k)$ satisfies the adjoint problem
\begin{equation}\label{Adjoint1}
\left\{\begin{array}{rcl}
\displaystyle\Delta v^{(n)}(\mx;k)+k^2v^{(n)}(\mx;k)=0&\mbox{in}&\Omega\\
\noalign{\medskip}\displaystyle\frac{\p v^{(n)}(\mx;k)}{\p\vn(\mx)}=u^{(n)}(\mx;k)-w^{(n)}(\mx)&\mbox{on}&\p\Omega.
  \end{array}\right.
\end{equation}
\end{lemma}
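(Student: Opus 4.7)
The plan is to expand $\mathbb{E}(\Omega|\Sigma) - \mathbb{E}(\Omega)$ to leading order in the trial-crack length $\gamma$ and match the resulting expansion to (\ref{AsymptoticExpansionTopologicalDerivative}) so as to identify $\varphi(\gamma)$ and $d_T\mathbb{E}(\mz)$. I would first let $u_\Sigma^{(n)}$ denote the solution of (\ref{ForwardProblem}) in the perturbed configuration (trial segment $\Sigma$ of length $\gamma$ at $\mz$) and introduce the scattered field $\tilde u^{(n)} := u_\Sigma^{(n)} - w^{(n)}$, which satisfies the homogeneous Helmholtz equation in $\Omega\setminus\overline{\Sigma}$, a homogeneous Neumann condition on $\p\Omega$, and the Dirichlet datum $\tilde u^{(n)} = -w^{(n)}$ on $\Sigma$. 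Substituting $u_\Sigma^{(n)} = w^{(n)} + \tilde u^{(n)}$ into the misfit and expanding the modulus squared decomposes the increment into a linear-in-$\tilde u^{(n)}$ cross boundary integral on $\p\Omega$ plus a quadratic term; the latter is $O(|\ln\gamma|^{-2})$ and therefore $o(\varphi(\gamma))$, so only the cross term contributes to the topological derivative.

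The next step is to transfer the cross integral from $\p\Omega$ onto the vanishing set $\Sigma$ via the adjoint state. Applying Green's second identity to the pair $(\tilde u^{(n)},\overline{v^{(n)}})$ on $\Omega\setminus\overline{\Sigma}$ and combining $\p_\vn v^{(n)} = u^{(n)} - w^{(n)}$ on $\p\Omega$ from (\ref{Adjoint1}), $\p_\vn\tilde u^{(n)} = 0$ on $\p\Omega$, continuity of $\tilde u^{(n)}$ across $\Sigma$, and smoothness of $v^{(n)}$ at the interior segment $\Sigma$, one obtains an identity of the form
\begin{equation*}
  \int_{\p\Omega}\overline{(u^{(n)} - w^{(n)})}\,\tilde u^{(n)}\,\rd\mx \;=\; \pm\int_\Sigma \overline{v^{(n)}(\mx;k)}\,\bigl[\p_\vn\tilde u^{(n)}(\mx;k)\bigr]\,\rd\sigma(\mx),
\end{equation*}
where $[\,\cdot\,]$ denotes the jump across $\Sigma$ and the sign is fixed by the orientation of $\vn$ together with the single-layer jump convention.

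The decisive step is the small-crack asymptotic for $[\p_\vn\tilde u^{(n)}]$. I would represent $\tilde u^{(n)}$ as a single-layer potential on $\Sigma$ with density $\phi^{(n)}$ against the Helmholtz fundamental solution in the plane; the Dirichlet datum $\tilde u^{(n)}(\mx;k) = -w^{(n)}(\mx;k) \approx -w^{(n)}(\mz;k)$ on $\Sigma$ (valid to leading order by Taylor-expanding $w^{(n)}$ around $\mz$) produces a first-kind integral equation whose kernel is dominated as $\gamma \to 0^+$ by $-(2\pi)^{-1}\ln|\mx - \boldsymbol{y}|$. Inverting this logarithmically singular operator on a segment of length $\gamma$ --- a classical computation carried out in the same geometry in \cite{P-TD3} --- yields, to leading order,
\begin{equation*}
  \int_\Sigma \phi^{(n)}(\mx;k)\,\rd\sigma(\mx) \;=\; \frac{2\pi\,w^{(n)}(\mz;k)}{\ln(\gamma/2)} + o\!\left(\frac{1}{|\ln\gamma|}\right),
\end{equation*}
the prefactor being (up to sign) the reciprocal of the logarithmic capacity of a segment of length $\gamma$. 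Combining with the single-layer jump relation, the continuity of $v^{(n)}$ at $\mz$, the identity $\mathrm{Re}(\overline{a}b) = \mathrm{Re}(a\overline{b})$, and (\ref{AsymptoticExpansionTopologicalDerivative}) identifies $\varphi(\gamma) = 2\pi/\ln(\gamma/2)$ and $d_T\mathbb{E}(\mz) = \mathrm{Re}\sum_{n=1}^{N} v^{(n)}(\mz;k)\,\overline{w^{(n)}(\mz;k)}$, as claimed.

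The main analytic obstacle is this last step: rigorously inverting the logarithmic single-layer operator on a vanishing Dirichlet segment and controlling the remainder uniformly in $\gamma$. This is precisely what produces the nonstandard $1/\ln\gamma$ scaling, a feature peculiar to one-dimensional Dirichlet inclusions in the plane (in contrast to the algebraic rates one encounters for two-dimensional inclusions). Because \cite{P-TD3} carries out this layer-potential analysis in identical geometry, the plan is to import those estimates rather than reproduce them.
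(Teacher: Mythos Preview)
The paper does not actually prove this lemma: it is stated without proof and attributed to \cite{P-TD3}. Your sketch---expand the misfit, drop the $O(|\ln\gamma|^{-2})$ quadratic remainder, transfer the linear cross term from $\partial\Omega$ to $\Sigma$ via Green's identity with the adjoint state $v^{(n)}$, and then invoke the small-segment single-layer asymptotics to produce the $2\pi/\ln(\gamma/2)$ scaling---is exactly the route taken in \cite{P-TD3}, so there is nothing further to compare; you have simply unpacked what the paper leaves to the citation.

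One minor notational point: your $u_\Sigma^{(n)}$ is really the \emph{computed} field with the trial crack $\Sigma$ only (what one might more naturally call $w_\Sigma^{(n)}$, since it replaces the background solution $w^{(n)}$ in the misfit), not a perturbation of the measured field $u^{(n)}$; the measured data stay fixed throughout. Your equations are consistent with this reading, but the symbol choice could mislead a reader into thinking the true cracks $\Gamma$ are still present in the perturbed forward solve.
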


Based on prior studies \cite{AM2,B1,CPR,LR1,P-TD1,P-TD3,P-TD5}, existence and location of $\Gamma_m$ can be identified through the map of $d_T\mathbb{E}(\mz)$. However, for a successful application of $d_T\mathbb{E}(\mz)$, exact values of $\epsb$ and $\mub$ must be known. However, since the values of $\epsb$ and $\mub$ are dependent on the applied frequency and ambient temperature, it will be inappropriate to apply \eqref{TopologicalDerivative} without prior information of exact $\epsb$ and $\mub$.

Since accurate value of $k$ is unknown, let us apply alternative background wavenumber $\ka=\omega\sqrt{\epsa\mua}$ with inaccurate permittivity $\epsa$ and permeability $\mua$, and introduce the following normalized topological derivative based imaging function
\begin{equation}\label{NormalizedTopologicalDerivative}
  \mathfrak{F}(\mz;\ka)=\frac{|d_T\mathbb{E}(\mz;\ka)|}{\displaystyle\max_{\mz\in\Omega}|d_T\mathbb{E}(\mz;\ka)|}\quad\text{where}\quad d_T\mathbb{E}(\mz;\ka)=\mathrm{Re}\sum_{n=1}^{N}v^{(n)}(\mz;\ka)\overline{w^{(n)}(\mz;\ka)},
\end{equation}
where $w^{(n)}(\mx;\ka)=\exp(i\ka\vt_n\cdot\mx)$ and $v^{(n)}(\mx;\ka)$ satisfies the adjoint problem
\begin{equation}\label{Adjoint2}
  \left\{\begin{array}{rcl}
    \displaystyle\Delta v^{(n)}(\mx;\ka)+\ka^2 v^{(n)}(\mx;\ka)=0&\mbox{in}&\Omega\\
\noalign{\medskip}\displaystyle\frac{\p v^{(n)}(\mx;\ka)}{\p\vn(\mx)}=u^{(n)}(\mx;k)-w^{(n)}(\mx;k)&\mbox{on}&\p\Omega.
  \end{array}\right.
\end{equation}
Fortunately, based on simulation results in Section \ref{sec:4}, the existence of $\Gamma_m$ can be recognized through the map of $\mathfrak{F}(\mz;\ka)$ even if $\ka\ne k$ but the identified location $\mx_m\in\Gamma_m$ is shifted in a specific direction. However, to the best of our knowledge, the mathematical background to support this has not been established yet.

\section{Structure of the imaging function}\label{sec:3}
In this section, we explore mathematical structure of the topological derivative based imaging function $\mathfrak{F}(\mz;\ka)$ and discuss its various properties. To this end, we recall the asymptotic expansion formula derived in \cite{AKLP}.

\begin{lemma}[Asymptotic expansion formula]
Let $u^{(n)}(\mx;k)$ satisfies \eqref{ForwardProblem} and $w^{(n)}(\mx)$ be the background solution of \eqref{ForwardProblem}. Then, if the condition \eqref{condition} is valid, the following asymptotic expansion formula holds uniformly on $\mx\in\p\Omega$;
\begin{equation}\label{Asymptoticformula}
u^{(n)}(\mx;k)-w^{(n)}(\mx;k)=\sum_{m=1}^{M}\frac{2\pi}{\ln(\ell_m/2)}w^{(n)}(\mx_m;k)\mathcal{N}(\mx,\mx_m;k)+\mathcal{O}\left(\frac{1}{|\ln\ell_m|^2}\right),
\end{equation}
where $\mathcal{N}$ denotes the Neumann function, which satisfies
\[\left\{\begin{array}{rcl}
    \displaystyle\Delta \mathcal{N}(\mx,\mz;k)+k^2\mathcal{N}(\mx,\mz;k)=-\delta(\mx,\mz)&\mbox{in}&\Omega\\
\noalign{\medskip}\displaystyle\frac{\p\mathcal{N}(\mx,\mz;k)}{\p\vn(\mx)}=0&\mbox{on}&\p\Omega.
  \end{array}\right.\]
Here, $\delta$ is the Dirac delta function.
\end{lemma}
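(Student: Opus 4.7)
The plan is to derive \eqref{Asymptoticformula} by converting the problem for the perturbation $\phi^{(n)}:=u^{(n)}-w^{(n)}$ into a boundary integral equation on $\Gamma$ and then extracting the leading behaviour of the unknown density as each $\ell_m\to 0$.

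Subtracting the problems satisfied by $u^{(n)}$ and $w^{(n)}$, one checks that $\phi^{(n)}$ satisfies the Helmholtz equation in $\Omega\setminus\overline{\Gamma}$, the homogeneous Neumann condition on $\p\Omega$ (since $u^{(n)}$ and $w^{(n)}$ share the same Neumann data there), and the Dirichlet data $\phi^{(n)}=-w^{(n)}$ on $\Gamma$. Because the Neumann function $\mathcal{N}(\cdot,\cdot;k)$ already encodes both the Helmholtz operator and the homogeneous Neumann condition on $\p\Omega$, I would use the single-layer ansatz
\begin{equation*}
\phi^{(n)}(\mx;k)=\sum_{m=1}^{M}\int_{\Gamma_m}\mathcal{N}(\mx,\mathbf{y};k)\,\varphi_m(\mathbf{y})\,\rd\sigma(\mathbf{y}),
\end{equation*}
so that only the Dirichlet trace on $\Gamma$ remains to be enforced, reducing the problem to determining the $M$ densities $\varphi_m$.

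Next, I would split $\mathcal{N}(\mx,\mathbf{y};k)=\Phi_k(\mx,\mathbf{y})+R_k(\mx,\mathbf{y})$ into the free-space Helmholtz fundamental solution and a part that is smooth on a neighbourhood of each $\Gamma_m$. In two dimensions $\Phi_k(\mx,\mathbf{y})\sim(2\pi)^{-1}\ln|\mx-\mathbf{y}|$ as $|\mx-\mathbf{y}|\to 0$, so after parameterising $\Gamma_m$ by arclength the self-interaction operator $\varphi_m\mapsto\int_{\Gamma_m}\Phi_k(\cdot,\mathbf{y})\varphi_m(\mathbf{y})\,\rd\sigma(\mathbf{y})$ is dominated by a constant-valued logarithmic operator whose asymptotic inverse produces the scalar factor $2\pi/\ln(\ell_m/2)$. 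The separation hypothesis $k|\mx_m-\mx_{m'}|\gg 1/4$ makes the cross-crack contributions together with $R_k$ a lower-order perturbation, while the shortness hypothesis $\ell_m<\lambda/2$ allows the replacement $-w^{(n)}(\mathbf{y};k)=-w^{(n)}(\mx_m;k)+\mathcal{O}(\ell_m)$ on $\Gamma_m$. Combining these estimates yields
\begin{equation*}
\varphi_m(\mathbf{y})=\frac{2\pi}{\ln(\ell_m/2)}\,w^{(n)}(\mx_m;k)+\mathcal{O}\!\left(\frac{1}{|\ln\ell_m|^{2}}\right)
\end{equation*}
uniformly on $\Gamma_m$.

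Finally, I would substitute this density back into the representation and evaluate at $\mx\in\p\Omega$, where $\mathcal{N}(\mx,\mathbf{y};k)$ is smooth in $\mathbf{y}\in\Gamma_m$ thanks to $\dist(\Gamma_m,\p\Omega)\geq C$. A Taylor expansion $\mathcal{N}(\mx,\mathbf{y};k)=\mathcal{N}(\mx,\mx_m;k)+\mathcal{O}(\ell_m)$ followed by integration over each $\Gamma_m$ then produces exactly \eqref{Asymptoticformula}. The main obstacle I anticipate is the quantitative inversion of the logarithmic single-layer operator on a short segment: one must track how the errors from the smooth part $R_k$, the cross-crack couplings, and the variation of $w^{(n)}$ across $\Gamma_m$ combine, and verify that they are all absorbed by $|\ln\ell_m|^{-2}$ rather than by the weaker $|\ln\ell_m|^{-1}$. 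This is precisely the delicate Fredholm-type analysis carried out in \cite{AKLP}.
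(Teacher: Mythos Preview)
The paper does not actually prove this lemma; it is quoted from \cite{AKLP} without argument, so there is no in-paper proof to compare against. Your overall strategy --- represent $u^{(n)}-w^{(n)}$ as a single-layer potential with the Neumann function as kernel, then exploit the logarithmic small-crack asymptotics --- is indeed the route taken in \cite{AKLP}.

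There is, however, a concrete error in your third paragraph. You assert that
\[
\varphi_m(\mathbf{y})=\frac{2\pi}{\ln(\ell_m/2)}\,w^{(n)}(\mx_m;k)+\mathcal{O}\!\left(|\ln\ell_m|^{-2}\right)\quad\text{uniformly on }\Gamma_m,
\]
i.e.\ that the density is asymptotically a constant. This is false: the solution of the constant-data logarithmic integral equation on a segment $[-\ell_m,\ell_m]$ is the equilibrium (Chebyshev) density, proportional to $(\ell_m^{2}-s^{2})^{-1/2}$, which blows up at the crack tips and is certainly not uniformly close to a constant. What the inversion actually yields is a statement about the \emph{total charge},
\[
\int_{\Gamma_m}\varphi_m(\mathbf{y})\,\rd\sigma(\mathbf{y})=\frac{2\pi}{\ln(\ell_m/2)}\,w^{(n)}(\mx_m;k)+\mathcal{O}\!\left(|\ln\ell_m|^{-2}\right),
\]
where the $\ln(\ell_m/2)$ is precisely $\ln$ of the logarithmic capacity of a segment of length $2\ell_m$. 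This distinction is not cosmetic for your last step: if $\varphi_m$ were truly the constant you wrote, integrating it over $\Gamma_m$ (which has length $2\ell_m$) against $\mathcal{N}(\mx,\mx_m;k)+\mathcal{O}(\ell_m)$ would introduce an extra factor $2\ell_m$ and you would \emph{not} recover \eqref{Asymptoticformula}. The fix is to track $\int_{\Gamma_m}\varphi_m$ rather than $\varphi_m$ pointwise; once you do that, the substitution into the representation formula on $\p\Omega$ goes through as you describe.
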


Now, we derive the main result.
\begin{theorem}\label{Theorem}
For sufficiently large $N$, $\mathfrak{F}(\mz;\ka)$ can be represented as follows
\begin{equation}\label{Structure_Function}
\mathfrak{F}(\mz;\ka)\approx\frac{|\Phi(\mz)|}{\displaystyle\max_{\mz\in\Omega}|\Phi(\mz)|},\quad\Phi(\mz)=\sum_{m=1}^{M}\frac{J_0(|k\mx_m-\ka\mz|)}{\ln(\ell_m/2)},
\end{equation}
where $J_0$ is the Bessel function of order zero of the first kind.
\end{theorem}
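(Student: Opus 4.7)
The plan is to reduce $d_T\mathbb{E}(\mz;\ka)$ to the claimed Bessel-function form in three stages: a boundary-integral representation of $v^{(n)}(\cdot;\ka)$, substitution of the asymptotic expansion \eqref{Asymptoticformula} for $u^{(n)}-w^{(n)}$, and a large-$N$ Helmholtz--Kirchhoff-type plane-wave identity.

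First I would represent the adjoint field as
\[
v^{(n)}(\mz;\ka) = \int_{\p\Omega}\mathcal{N}_{\ka}(\mz,\mx;\ka)\bigl(u^{(n)}(\mx;k)-w^{(n)}(\mx;k)\bigr)\,\rd\sigma(\mx),
\]
where $\mathcal{N}_{\ka}$ denotes the Neumann function for $\Delta+\ka^{2}$ on $\Omega$; this is the natural solution formula for the adjoint problem \eqref{Adjoint2}. Plugging in the asymptotic expansion \eqref{Asymptoticformula} for the Neumann data on $\p\Omega$ and discarding the $\mathcal{O}(|\ln\ell_{m}|^{-2})$ remainder yields
\[
v^{(n)}(\mz;\ka)\approx\sum_{m=1}^{M}\frac{2\pi\,w^{(n)}(\mx_{m};k)}{\ln(\ell_{m}/2)}\,\mathcal{H}_{m}(\mz),\qquad \mathcal{H}_{m}(\mz):=\int_{\p\Omega}\mathcal{N}_{\ka}(\mz,\mx;\ka)\,\mathcal{N}(\mx,\mx_{m};k)\,\rd\sigma(\mx).
\]
Multiplying by $\overline{w^{(n)}(\mz;\ka)}=\exp(-i\ka\vt_{n}\cdot\mz)$, swapping the two finite sums, and taking real parts extracts a factor $\sum_{n=1}^{N}\exp(i\vt_{n}\cdot(k\mx_{m}-\ka\mz))$. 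The shifted argument $k\mx_{m}-\ka\mz$ arises precisely because $w^{(n)}(\mx_{m};k)$ carries the true wavenumber $k$ while the probe $\overline{w^{(n)}(\mz;\ka)}$ carries the applied wavenumber $\ka$; this is the mechanism that ultimately produces the mislocalisation reported in the simulations.

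The key analytic input is then the large-$N$ plane-wave identity. With $\{\vt_{n}\}$ uniformly distributed on $\mathbb{S}^{1}$, a Riemann-sum approximation of the integral representation of $J_{0}$ gives
\[
\frac{1}{N}\sum_{n=1}^{N}e^{i\vt_{n}\cdot\mr}\;\approx\;\frac{1}{2\pi}\int_{\mathbb{S}^{1}}e^{i\vt\cdot\mr}\,\rd\sigma(\vt)\;=\;J_{0}(|\mr|),
\]
applied with $\mr=k\mx_{m}-\ka\mz$. Combined with the preceding step, this produces a leading-order expression proportional to $\sum_{m}\mathcal{H}_{m}(\mz)\,J_{0}(|k\mx_{m}-\ka\mz|)/\ln(\ell_{m}/2)$, and after taking absolute values and dividing by the maximum one obtains \eqref{Structure_Function}.

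I expect the principal obstacle to be the treatment of the residual factor $\mathcal{H}_{m}(\mz)$. In the coincident case $\ka=k$ a Green's-identity argument collapses the two-Neumann-function boundary integral to $\mathcal{N}(\mz,\mx_{m};k)$ via the point-source singularity, but for $\ka\neq k$ no such exact cancellation is available, so one must argue instead that $\mathcal{H}_{m}$ is analytic in $\mz$ and varies on the geometric scale of $\Omega$ rather than on the wavelength scale of the Bessel oscillations. Consequently $\mathcal{H}_{m}$ acts as a slowly varying envelope that, to leading order, is absorbed by the normalisation $\max_{\mz\in\Omega}|d_{T}\mathbb{E}(\mz;\ka)|$, leaving the $J_{0}$ factors to dictate peak locations and widths. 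Making this final reduction fully quantitative---perhaps by specialising to a geometry such as a disk, where $\mathcal{N}_{\ka}$ admits an explicit Fourier--Bessel expansion, or by a direct estimate on the regular part of $\mathcal{N}_{\ka}$---is the only technical gap I foresee in the argument.
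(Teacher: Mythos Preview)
Your proposal is correct and follows essentially the same route as the paper: boundary-integral representation of $v^{(n)}(\cdot;\ka)$ via the Neumann function, substitution of the asymptotic expansion \eqref{Asymptoticformula}, and the large-$N$ plane-wave identity yielding $J_0(|k\mx_m-\ka\mz|)$. The only notable difference is that the paper handles your residual factor $\mathcal{H}_m(\mz)$ less ambitiously than you anticipate: rather than arguing it is a slowly varying envelope, the paper simply decomposes $\mathcal{N}=-\tfrac{1}{2\pi}\ln|\cdot|+\mathcal{R}$, splits $\p\Omega$ into a small arc near $\mz$ and its complement, and shows $\mathcal{H}_m(\mz)$ is uniformly bounded (no blow-up as $\mz\to\p\Omega$), after which it asserts $d_T\mathbb{E}(\mz;\ka)\propto\Phi(\mz)$ and appeals to the normalisation in $\mathfrak{F}$---so the ``technical gap'' you foresee is in fact closed at the same heuristic level you describe, but with less machinery.
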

\begin{proof}
Since $v^{(n)}(\mx;\ka)$ satisfies \eqref{Adjoint2}, for $\mz\in\Omega\backslash\p\Omega$, we have
\begin{equation}\label{FormulaV}
v^{(n)}(\mz;\ka)=\int_{\p\Omega}\frac{v^{(n)}(\mx;\ka)}{\p\vn(\mx)}\mathcal{N}(\mx,\mz;\ka)\rd\mx=\int_{\p\Omega}\left(u^{(n)}(\mx;k)-w^{(n)}(\mx;k)\right)\mathcal{N}(\mx,\mz;\ka)\rd\mx.
\end{equation}
Applying \eqref{Asymptoticformula} and \eqref{FormulaV} to \eqref{NormalizedTopologicalDerivative}, we can obtain
\begin{align}
\begin{aligned}\label{Term}
d_T\mathbb{E}(\mz;\ka)&=\mathrm{Re}\sum_{n=1}^{N}v^{(n)}(\mz;\ka)\overline{w^{(n)}(\mz)}\\
&\approx\mathfrak{Re}\sum_{n=1}^{N}\left[\int_{\p\Omega}\left(u^{(n)}(\mx;k)-w^{(n)}(\mx)\right)\mathcal{N}(\mx,\mz;\ka)\rd\mx\right]\overline{w^{(n)}(\mz;\ka)}\\
&\approx\mathfrak{Re}\sum_{n=1}^{N}\left[\int_{\p\Omega}\left(\sum_{m=1}^{M}\frac{2\pi}{\ln(\ell_m/2)}w^{(n)}(\mx_m;k)\mathcal{N}(\mx,\mx_m;k)\right)\mathcal{N}(\mx,\mz;\ka)\rd\mx\right]\overline{w^{(n)}(\mz;\ka)}\\
&=\mathfrak{Re}\sum_{m=1}^{M}\frac{2\pi}{\ln(\ell_m/2)}\left(\int_{\p\Omega}\mathcal{N}(\mx,\mx_m;k)\mathcal{N}(\mx,\mz;\ka)\rd\mx\right)\left(\sum_{n=1}^{N}w^{(n)}(\mx_m;k)\overline{w^{(n)}(\mz;\ka)}\right).
\end{aligned}
\end{align}

Since the following relationship holds for sufficiently large $N$ (see \cite{P-SUB3} for derivation),
\[\frac{1}{N}\sum_{n=1}^{N}\exp(\vt_n\cdot\mx)\approx J_0(|\mx|),\]
we can derive
\begin{equation}\label{Term1}
\sum_{n=1}^{N}w^{(n)}(\mx_m;k)\overline{w^{(n)}(\mz;\ka)}=\sum_{n=1}^{N}\exp\Big(\vt_n\cdot(k\mx_m-\ka\mz)\Big)=NJ_0(|k\mx_m-\ka\mz|).
\end{equation}

Next, following to \cite{AKL}, the Neumann function $\mathcal{N}$ has a logarithmic singularity. Thus, it can be decomposed into the singular and regular functions;
\[\mathcal{N}(\mx,\mz;k)=-\frac{1}{2\pi}\ln|\mx-\mz|+\mathcal{R}(\mx,\mz;k),\quad\mx\ne\mz,\]
where $\mathcal{R}(\mx,\mz;k)\in\mathcal{C}^{1,\alpha}$ in both $\mathbf{x}$ and $\mathbf{y}$ for some $\alpha$ with $0<\alpha<1$. Since $\mx\in\p\Omega$ and $\mx_m\in\Gamma_m$, there is no blow up of $\mathcal{N}(\mx,\mx_m;k)$ so that
\[|\mathcal{N}(\mx,\mx_m;k)|\leq\frac{1}{2\pi}\ln|\mx-\mx_m|+|\mathcal{R}(\mx,\mx_m;k)| <\frac{1}{2\pi}\ln\mbox{diam}(\Omega)+\max_{\mx\in\p\Omega}|\mathcal{R}(\mx,\mx_m;k)|,\]
where $\mbox{diam}(\Omega)$ denotes the diameter of $\Omega$. Hence, applying H{\"o}lder's inequality yields
\[\abs{\int_{\p\Omega}\mathcal{N}(\mx,\mx_m;k)\mathcal{N}(\mx,\mz;\ka)\rd\mx}\leq \bigg(\frac{1}{2\pi}\ln\mbox{diam}(\Omega)+\max_{\mx\in\p\Omega}|\mathcal{R}(\mx,\mx_m;k)|\bigg) \int_{\p\Omega}|\mathcal{N}(\mx,\mz;\ka)|\rd\mx.\]
Now, we consider the singularity of $\mathcal{N}(\mx,\mz;\ka)$ when $\mz\in\Omega$ is close to $\mx\in\p\Omega$. To this end, for sufficiently small constant $\rho>0$, generate a ball $B(\mx,\rho)$ with radius $\rho$ and center $\mx$ such that
\[B(\mx,\rho)\cap\Gamma=\varnothing.\]
Based on this, let us split $\p\Omega$ into two smooth curves $\p\Omega=\p\Omega_{\sing}\cup\p\Omega_{\reg}$ (see Figure \ref{Boundaries} for illustration), where
\[\p\Omega_{\sing}=\Omega\cap\p B(\mx,\rho)\quad\text{and}\quad\p\Omega_{\reg}=\p\Omega\backslash\overline{\p\Omega_{\sing}}.\]
Then, we can examine that
\begin{align*}
\int_{\p\Omega}|\mathcal{N}(\mx,\mz;\ka)|\rd\mx\leq &\frac{1}{2\pi}\int_{\p\Omega}\ln|\mx-\mz|\rd\mx +\int_{\p\Omega}|\mathcal{R}(\mathbf{z},\mathbf{y};\omega)|\rd\mx\\
\leq&\frac{1}{2\pi}\lim_{\rho\to0+}\left(\int_{\p\Omega_S}\ln|\mx-\mz|\rd\mx +\int_{\p\Omega_R}\ln|\mx-\mz|\rd\mx\right)+\max_{\mx\in\p\Omega}|\mathcal{R}(\mx,\mz;\ka)|\leng(\p\Omega)\\
\leq&\frac{1}{2\pi}\lim_{\rho\to0+}\bigg(\rho\ln\rho+(\leng(\p\Omega)-\rho)\ln|\leng(\p\Omega)|\bigg)+\max_{\mx\in\p\Omega}|\mathcal{R}(\mx,\mz;\ka)|\leng(\p\Omega)\\
=&\leng(\p\Omega)\bigg(\frac{1}{2\pi}\ln|\leng(\p\Omega)|+\max_{\mx\in\p\Omega}|\mathcal{R}(\mx,\mz;\ka)|\bigg).
\end{align*}
Here, $\leng(\p\Omega)$ denotes the length of $\partial\Omega$. Therefore,
\begin{multline}\label{Term2}
  \abs{\int_{\p\Omega}\mathcal{N}(\mx,\mx_m;k)\mathcal{N}(\mx,\mz;\ka)\rd\mx}\leq\bigg(\frac{1}{2\pi}\ln\mbox{diam}(\Omega)+\max_{\mx\in\p\Omega}|\mathcal{R}(\mx,\mx_m;k)|\bigg) \times\\
  \leng(\p\Omega)\bigg(\frac{1}{2\pi}\ln|\leng(\p\Omega)|+\max_{\mx\in\p\Omega}|\mathcal{R}(\mx,\mz;\ka)|\bigg).
\end{multline}
This means that there is no blowup of $\int_{\p\Omega}\mathcal{N}(\mx,\mx_m;k)\mathcal{N}(\mx,\mz;\ka)\rd\mx$ for any $\mx\in\p\Omega$ and $\mz\in\Omega$.

Plugging \eqref{Term1} and \eqref{Term2} into \eqref{Term}, we can examine that
\[d_T\mathbb{E}(\mz;\ka)\propto\sum_{m=1}^{M}\frac{J_0(|k\mx_m-\ka\mz|)}{\ln(\ell_m/2)}\]
and correspondingly, \eqref{Structure_Function} can be ontained.
\end{proof}

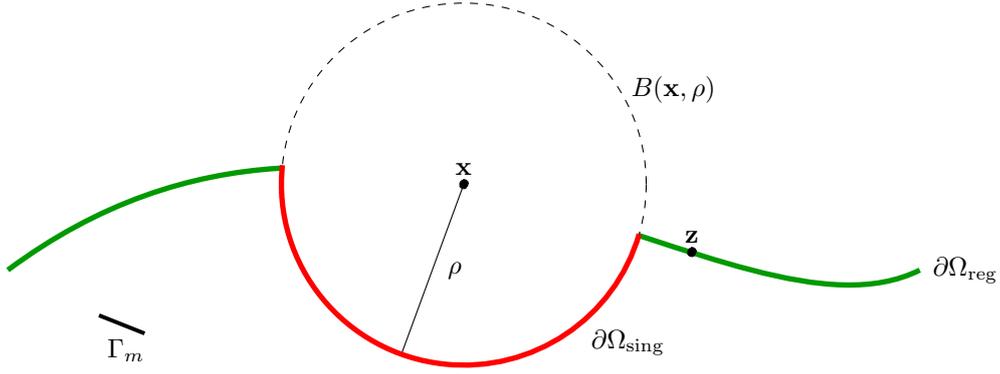
\begin{figure}[h]
\begin{center}
\begin{tikzpicture}[scale=1.2]
\def\SA{174};
\def\FA{344};
\draw[green!60!black,rounded corners,line width=2pt,fill=white] (-5,0) .. controls (-1,3) and (3,-1) .. (5,0);
\node at (5.5,0) {$\p\Omega_{\reg}$};
\draw[fill=white,dashed] (0,0.95) circle [radius=2];
\node at (2.3,2) {$B(\mx,\rho)$};
\draw[black,-] (0,0.95) -- ({2*cos(250)},{2*sin(250)+0.95});
\node at (-0.1,0) {$\rho$};
\draw[red,solid,line width=2pt] ({2*cos(\SA)},{2*sin(\SA)+0.95}) arc (\SA:\FA:2);
\node at (1.8,-0.8) {$\p\Omega_{\sing}$};
\draw[fill=black,dashed] (0,0.95) circle [radius=0.05];
\node [above] at (0,0.95) {$\mx$};
\draw[fill=black,dashed] (2.5,0.2) circle [radius=0.05];
\node [above] at (2.5,0.2) {$\mz$};
\draw[black,line width=1.5pt,-] (-4,-0.5) -- (-3.5,-0.7) node [below,xshift=-7,yshift=2] {$\Gamma_m$};
\end{tikzpicture}
\caption{\label{Boundaries}Illustration of $\p\Omega_{\sing}$ and $\p\Omega_{\reg}$.}
\end{center}
\end{figure}

Now, we discuss some properties of $\mathfrak{F}(\mz;\ka)$ based on the Theorem \ref{Theorem}.

\begin{property}[Identification of inaccurate location and shape]\label{property1}
Since $J_0(x)$ has its maximum value $1$ at $x=0$, the location $\mz=(k/\ka)\mx_m$, $m=1,2,\ldots,M$, will be identified through the map of $\mathfrak{F}(\mz;\ka)$ instead of the true location $\mx_m\in\Gamma_m$. Hence, although the existence of cracks can be recognized, their exact location cannot be determined unless the exact values of $\epsb$ and $\mub$ are known.
\end{property}

\begin{property}[Observation of shifting effect when applying inaccurate permittivity]\label{property2}
Assume that $\epsa\ne\epsb$, $\mua=\mub$, and $\ka$ satisfies \eqref{condition}. Then, for $\mx_m\in\Sigma_m$, identified location $\mz$ becomes
\[\mz=\left(\frac{k}{\ka}\right)\mx_m=\sqrt{\frac{\epsb}{\epsa}}\mx_m.\]
Hence, the retrieved location and length of $\Gamma_m$ will be close to the origin and shorter than the true $\Gamma_m$, respectively when $\epsa>\epsb$ (refer to the red-colored straight lines in Figure \ref{IllustrationResult}). Otherwise, the retrieved location and length of $\Gamma_m$ will be far-away from the origin and longer than the true $\Gamma_m$, respectively when $\epsa<\epsb$ (refer to the blue-colored straight lines in Figure \ref{IllustrationResult}). If the center of a crack $\Gamma_m$ is the origin, its accurate location can be retrieved for any nonzero $\epsa$. Same phenomenon can be examined for the case $\epsa=\epsb$ and $\mua\ne\mub$.
\end{property}

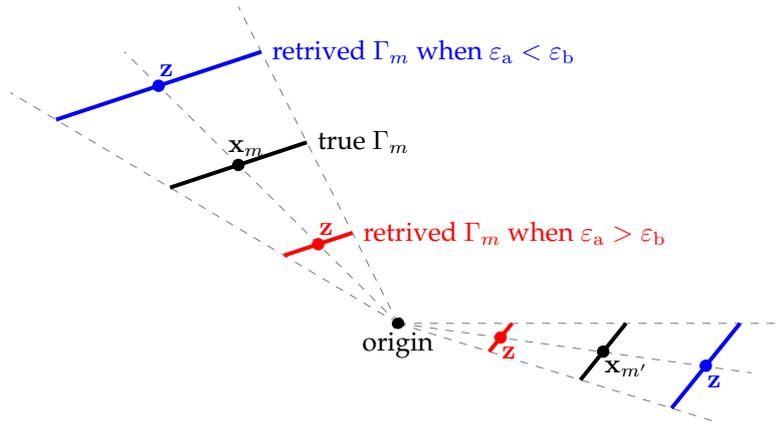
\begin{figure}[h]
\begin{center}
\begin{tikzpicture}[scale=1.5]
\draw[gray,dashed] (0,0) -- (-1.4,2.8);
\draw[gray,dashed] (0,0) -- (-2.4,2.4);
\draw[gray,dashed] (0,0) -- (-3.4,2.04);
\draw[gray,dashed] (0,0) -- (3.4,0);
\draw[gray,dashed] (0,0) -- (3.1,-0.4375);
\draw[gray,dashed] (0,0) -- (2.8,-0.875);
\draw[fill=black,dashed] (0,0) circle [radius=0.05] node[below,black] { origin};
\draw[red,line width=1.5pt,-] (-1,0.6) -- (-0.4,0.8) node [right] { retrived $\Gamma_m$ when $\epsa>\epsb$};
\draw[red,fill=red] (-0.7,0.7) circle (0.05cm) node[above,xshift=3,red] {$\mz$};
\draw[black,line width=1.5pt,-] (-2,1.2) -- (-0.8,1.6) node [right] { true $\Gamma_m$};
\draw[black,fill=black] (-1.4,1.4) circle (0.05cm) node[above,xshift=3,black] {$\mx_m$};
\draw[blue,line width=1.5pt,-] (-3,1.8) -- (-1.2,2.4) node [right] { retrived $\Gamma_m$ when $\epsa<\epsb$};
\draw[blue,fill=blue] (-2.1,2.1) circle (0.05cm) node[above,xshift=3,blue] {$\mz$};

\draw[red,line width=1.5pt,-] (0.8,-0.25) -- (1,0);
\draw[red,fill=red] (0.9,-0.125) circle (0.05cm) node[below right,xshift=-3,red] {$\mz$};
\draw[black,line width=1.5pt,-] (1.6,-0.5) -- (2,0);
\draw[black,fill=black] (1.8,-0.25) circle (0.05cm) node[below right,xshift=-3,black] {$\mx_{m'}$};
\draw[blue,line width=1.5pt,-] (2.4,-0.75) -- (3,0);
\draw[blue,fill=blue] (2.7,-0.375) circle (0.05cm) node[below right,xshift=-3,blue] {$\mz$};
\end{tikzpicture}
\caption{\label{IllustrationResult} (Property \ref{property2}) Illustration of simulation results. Black-colored straight lines are true cracks, blue- and red-colored straight lines are retrieved cracks when $\epsa<\epsb$ and $\epsa>\epsb$, respectively.}
\end{center}
\end{figure}

\section{Numerical simulation results}\label{sec:4}
In this section, we present some numerical simulation results with synthetic data to support Theorem \ref{Theorem}. For the simulation, the homogeneous domain $\Omega$ is chosen as the unit circle centered at the origin, applied frequency of operation is selected as $f=\SI{875}{\mega\hertz}$, and the incident direction $\vt_n$ is
\[\vt_n=(\cos\theta_n,\sin\theta_n)=\left(\cos\frac{2(n-1)\pi}{N},\sin\frac{2(n-1)\pi}{N}\right),\quad n=1,2,\ldots,N(=32).\]
With this configuration, the boundary measurement data in the absence and presence of small cracks, and the adjoint problem \eqref{Adjoint2} were obtained and solved by the finite element method (FEM), respectively. After the generation of the boundary measurement data, a $20$dB Gaussian random noise was added to the unperturbed data.

\begin{figure}[h]
\begin{center}
\subfigure[Examples \ref{Ex1} and \ref{Ex2}]{
\begin{tikzpicture}[scale=2.5]
\draw[gray] (0,0) circle (1);
\node at (0.8,-0.8) {$\Omega$};
\draw[black,line width=1.5pt,-] (-0.03,0) -- (0.03,0) node [right] {$\Gamma_{\mathrm{o}}$};
\draw[black,line width=1.5pt,-] (0.5788,-0.0212) -- (0.6212,0.0212) node [right] {$\Gamma_{\mathrm{x}}$};
\draw[black,line width=1.5pt,-] (-0.03,0.5) -- (0.03,0.5) node [right] {$\Gamma_{\mathrm{y}}$};
\end{tikzpicture}}\quad
\subfigure[Example \ref{Ex3}]{\begin{tikzpicture}[scale=2.5]
\draw[gray] (0,0) circle (1);
\node at (0.8,-0.8) {$\Omega$};
\draw[black,line width=1.5pt,-] (-0.63,-0.1) -- (-0.57,-0.1) node [right] {$\Gamma_1$};
\draw[black,line width=1.5pt,-] (0.2788,0.4788) -- (0.3212,0.5212) node [right] {$\Gamma_2$};
\draw[black,line width=1.5pt,-] (0.1740,-0.5850) -- (0.2260,-0.6150) node [right] {$\Gamma_3$};
\end{tikzpicture}}\quad
\subfigure[Example \ref{Ex4}]{\begin{tikzpicture}[scale=2.5]
\draw[gray] (0,0) circle (1);
\node at (0.8,-0.8) {$\Omega$};
\draw[black,line width=1.5pt,-] (-0.62,-0.1) -- (-0.58,-0.1) node [right] {$\Gamma_1$};
\draw[black,line width=1.5pt,-] (0.2576,0.4576) -- (0.3424,0.5424) node [right] {$\Gamma_2$};
\draw[black,line width=1.5pt,-] (0.2087,-0.6050) -- (0.1913,-0.5950) node [right] {$\Gamma_3$};
\end{tikzpicture}}
\caption{\label{IllustrationCracks} Illustration of simulation results. Black-colored straight lines are true cracks, blue- and red-colored straight lines are retrieved cracks when $\epsa>\epsb$ and $\epsa<\epsb$, respectively.}
\end{center}
\end{figure}
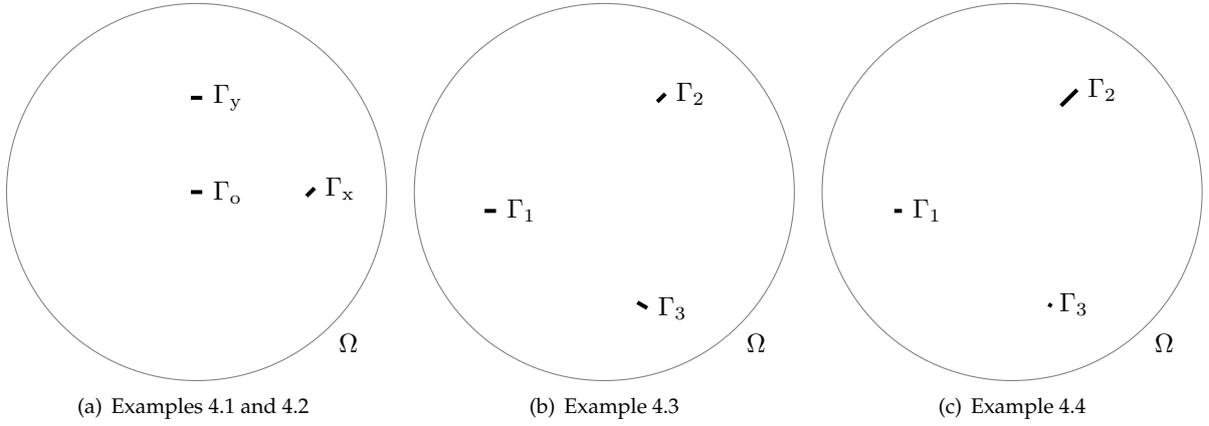

\begin{example}[Single crack centered at the origin]\label{Ex1}
We consider the identification of single crack located at $\mx_{\mathrm{o}}$:
\[\Gamma_{\mathrm{o}}=\set{(s,0):-0.03\leq s\leq0.03},\quad\mx_{\mathrm{o}}=(0,0).\]
Based on the imaging results in Figure \ref{Result1}, peak of magnitude $1$ was appeared at the center. Hence, by regarding the maximum value of $\mathfrak{F}(\mz;\ka)$, it is possible to recognize the exact location of $\Gamma_{\mathrm{o}}$ for any value of $\epsa$.
\end{example}

\begin{figure}[h]
\begin{center}
\subfigure[$\epsa=\epsb$]{\includegraphics[width=.33\textwidth]{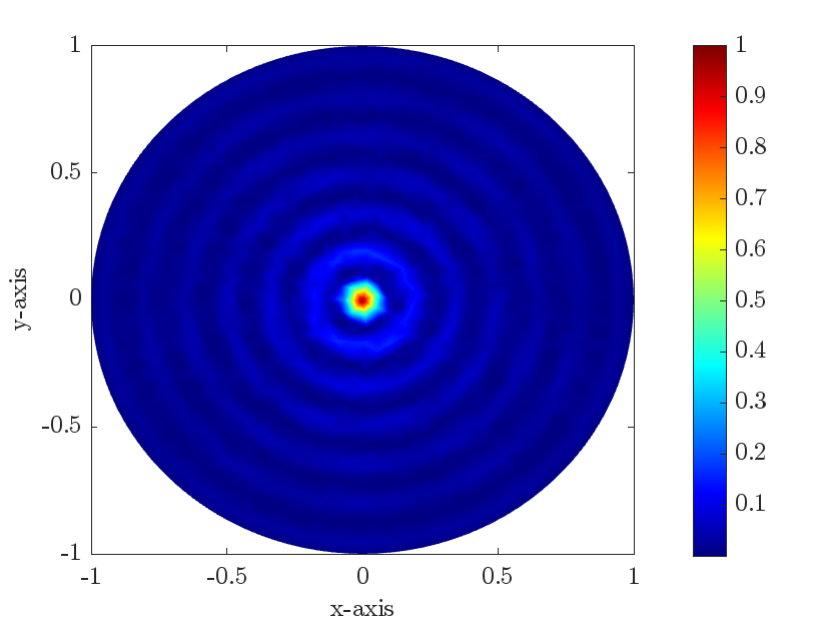}}\hfill
\subfigure[$\epsa=0.6\epsb$]{\includegraphics[width=.33\textwidth]{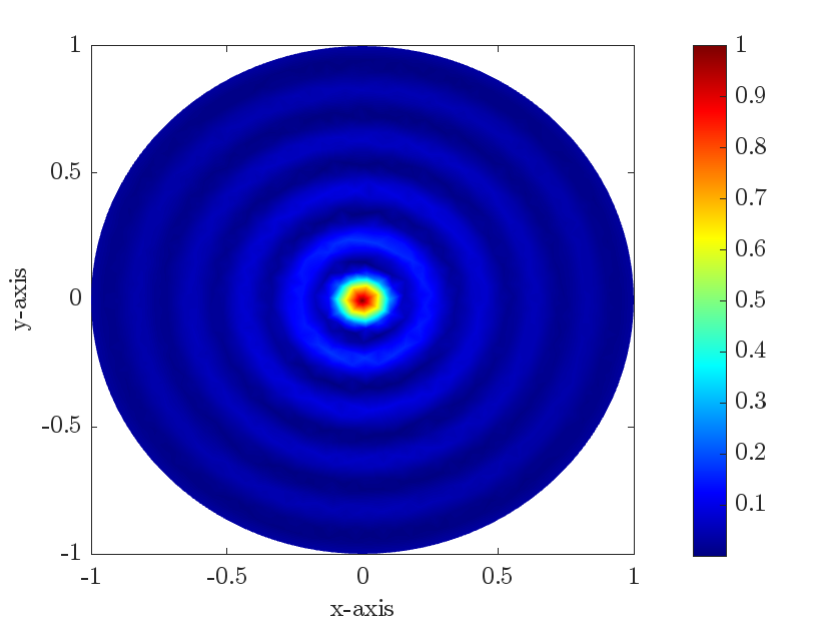}}\hfill
\subfigure[$\epsa=2.0\epsb$]{\includegraphics[width=.33\textwidth]{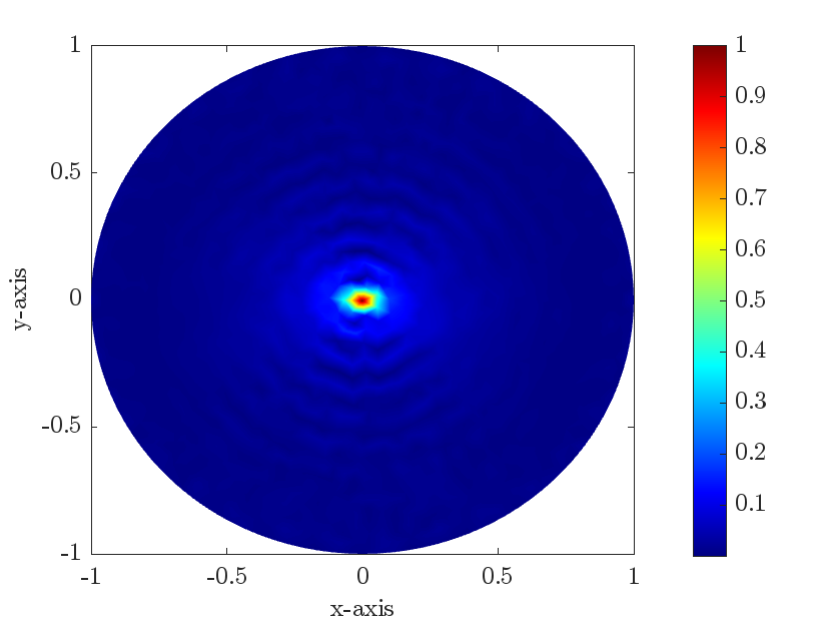}}
\caption{\label{Result1}(Example \ref{Ex1}) Maps of $\mathfrak{F}(\mz;\ka)$ with various $\epsa$ at $f=\SI{875}{\mega\hertz}$.}
\end{center}
\end{figure}

\begin{example}[Two cracks whose centers are located on the axis]\label{Ex2}
We consider the identification of two cracks with same length:
\begin{align*}
\Gamma_{\mathrm{x}}&=\set{\mathcal{R}_{\SI{45}{\degree}}(s+0.6,s):-0.03\leq s\leq0.03},&\mx_{\mathrm{x}}&=(0.6,0)\\
\Gamma_{\mathrm{y}}&=\set{(s,0.5):-0.03\leq s\leq0.03},&\mx_{\mathrm{y}}&=(0,0.5),
\end{align*}
where $\mathcal{R}_\theta$ denotes the rotation by $\theta$. Based on the imaging results in Figure \ref{Result2}, we can examine that $\Gamma_{\mathrm{x}}$ is located somewhere along the $x$-axis; that is, while the exact $y$-coordinate of the center can be determined, the $x$-coordinate cannot. Similarly, $\Gamma_{\mathrm{y}}$ is on located somewhere along the $y$-axis but its true location and shape cannot be identified. It is interesting to note that, if $\epsa=\epsb$, then the value of $\mathfrak{F}(\mz;\ka)$ is greater when $\mz$ lies on $\Gamma_{\mathrm{x}}$ than when it lies on $\Gamma_{\mathrm{y}}$ but in the case where $\epsa=1.2\epsb$, the situation is reversed. Finally, if $\epsa\geq1.5\epsb$, it is very difficult to distinguish between the $\Gamma_{\mathrm{y}}$ and several artifacts in the imaging results.
\end{example}

\begin{figure}[h]
\begin{center}
\subfigure[$\epsa=\epsb$]{\includegraphics[width=.33\textwidth]{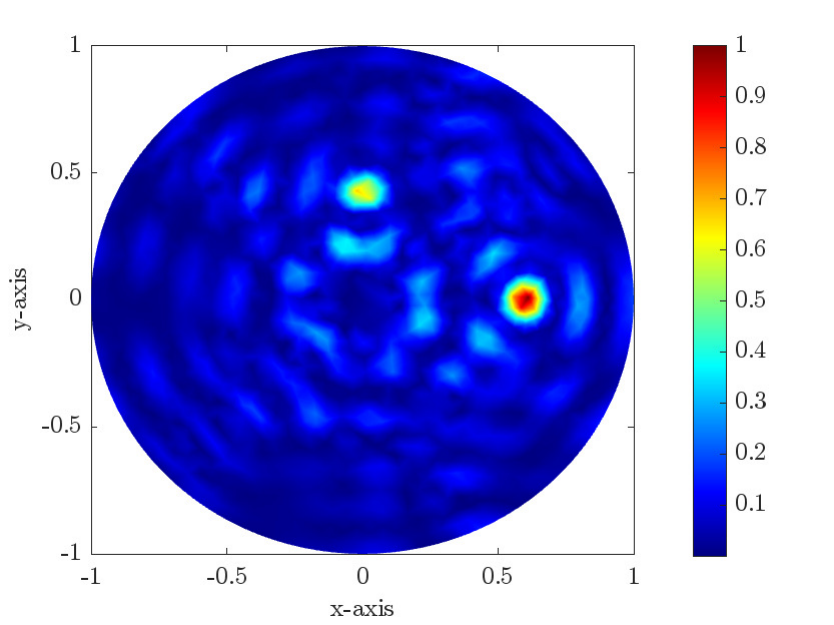}}\hfill
\subfigure[$\epsa=0.6\epsb$]{\includegraphics[width=.33\textwidth]{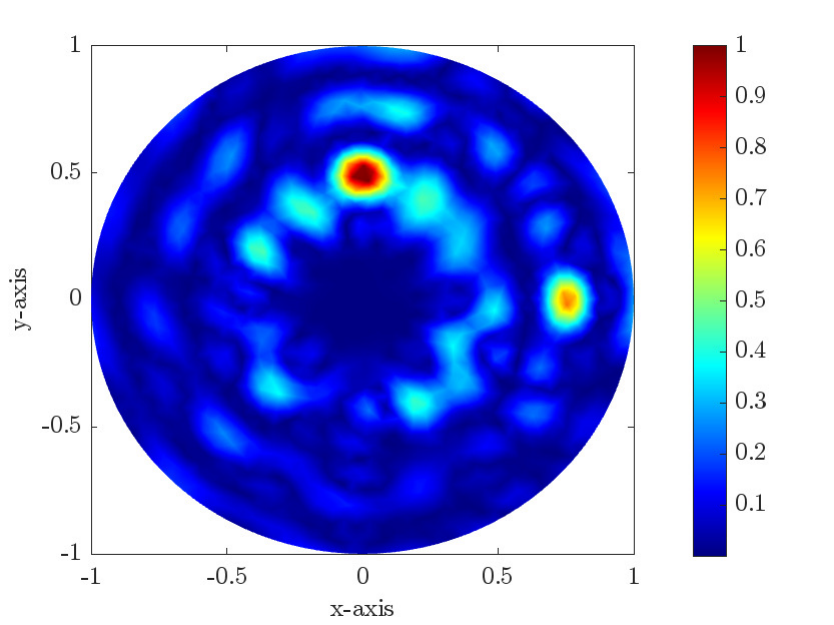}}\hfill
\subfigure[$\epsa=0.8\epsb$]{\includegraphics[width=.33\textwidth]{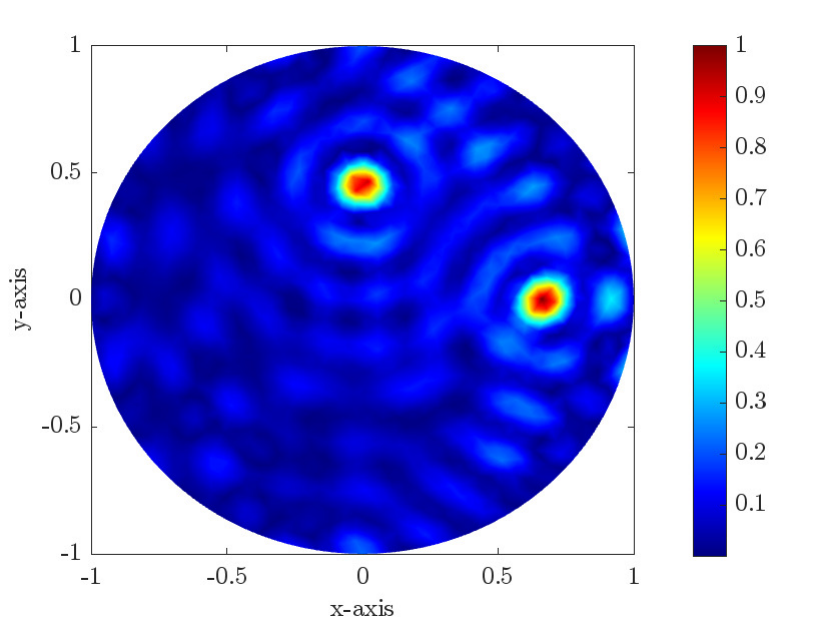}}
\\
\subfigure[$\epsa=1.2\epsb$]{\includegraphics[width=.33\textwidth]{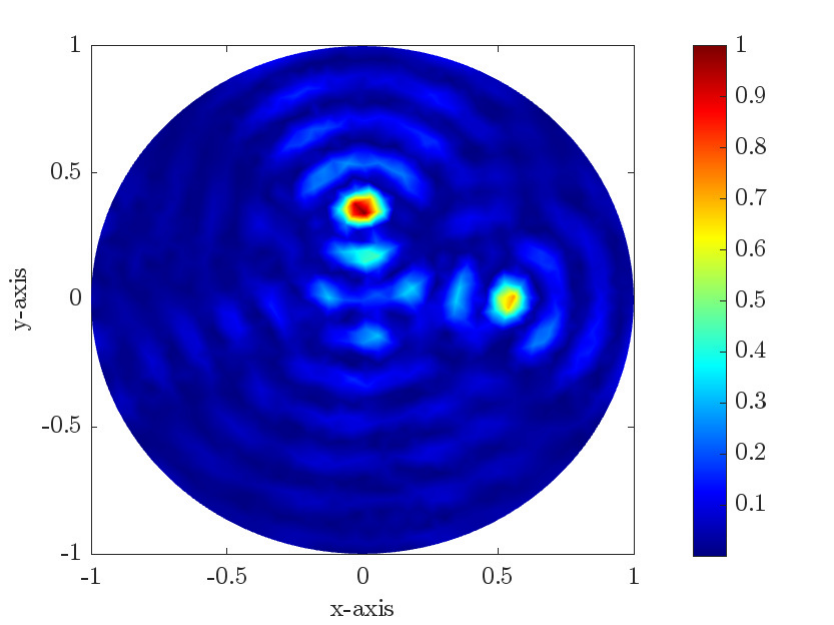}}\hfill
\subfigure[$\epsa=1.5\epsb$]{\includegraphics[width=.33\textwidth]{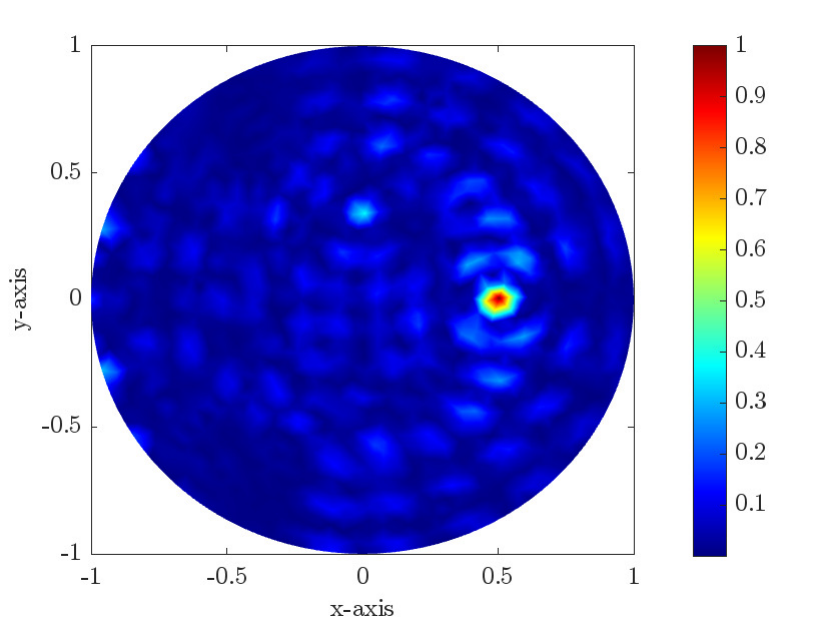}}\hfill
\subfigure[$\epsa=2.0\epsb$]{\includegraphics[width=.33\textwidth]{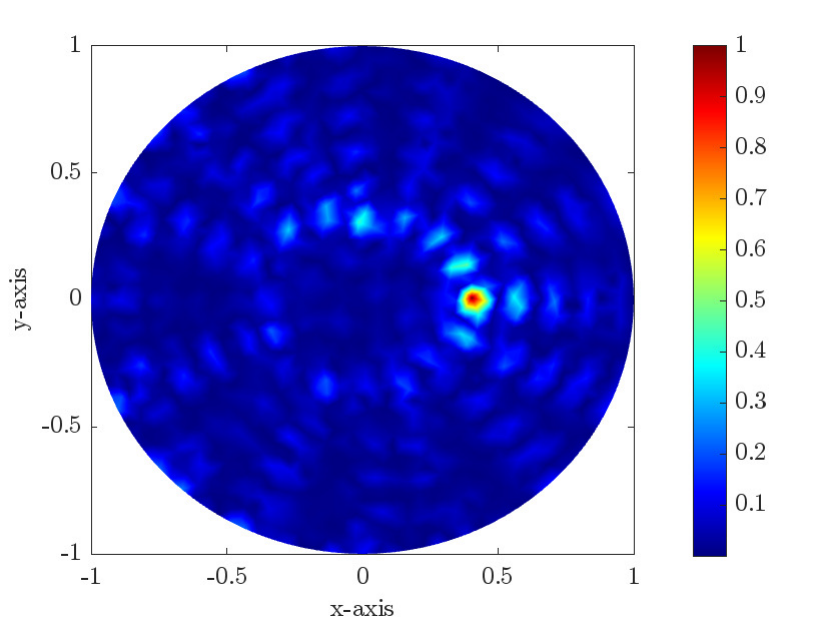}}
\caption{\label{Result2}(Example \ref{Ex2}) Maps of $\mathfrak{F}(\mz;\ka)$ with various $\epsa$ at $f=\SI{875}{\mega\hertz}$.}
\end{center}
\end{figure}

\begin{example}[Three cracks with same length]\label{Ex3}
We consider the identification of three cracks with same length:
\begin{align*}
\Gamma_1&=\set{(s-0.6,-0.1):-0.03\leq s\leq0.03},\quad&\mx_1&=(-0.6,-0.1)\\
\Gamma_2&=\set{\mathcal{R}_{\SI{45}{\degree}}(s+0.3,s+0.5):-0.03\leq s\leq0.03},\quad&\mx_2&=(0.3,0.5)\\
\Gamma_3&=\set{\mathcal{R}_{\SI{210}{\degree}}(s+0.2,s-0.6):-0.03\leq s\leq0.03},\quad&\mx_3&=(0.2,-0.6).
\end{align*}
Figure \ref{Result3} shows the maps of $\mathfrak{F}(\mz;\ka)$ with various $\epsa$. Based on the imaging results, if inaccurate value of $k$ was applied, it is impossible to obtain accurate information about the $x$ and $y$ components of $\mx_m$, $m=1,2,3$, i.e., exact location of $\Gamma_m$ cannot be retrieved. As we discussed in Property \ref{property2}, the identified location of each $\mx_m$ is close to the origin when $\epsa>\epsb$ and otherwise, the identified location of each $\mx_m$ is located far from the origin when $\epsa<\epsb$. Moreover, contrary to the theoretical result, although all $\Gamma_m$ have the same length, the value of $\mathfrak{F}(\mz;\ka)$ at $\mz\in\Gamma_3$ is smaller than at $\mz\in\Gamma_1$ or $\Gamma_2$ when $\epsa=0.6\epsb$ and $1.2\epsb$. Finally, similar to the results in Example \ref{Ex2}, it is very difficult to distinguish between the $\Gamma_3$ and several artifacts in the map of $\mathfrak{F}(\mz;2\epsb)$.
\end{example}

\begin{figure}[h]
\begin{center}
\subfigure[$\epsa=\epsb$]{\includegraphics[width=.33\textwidth]{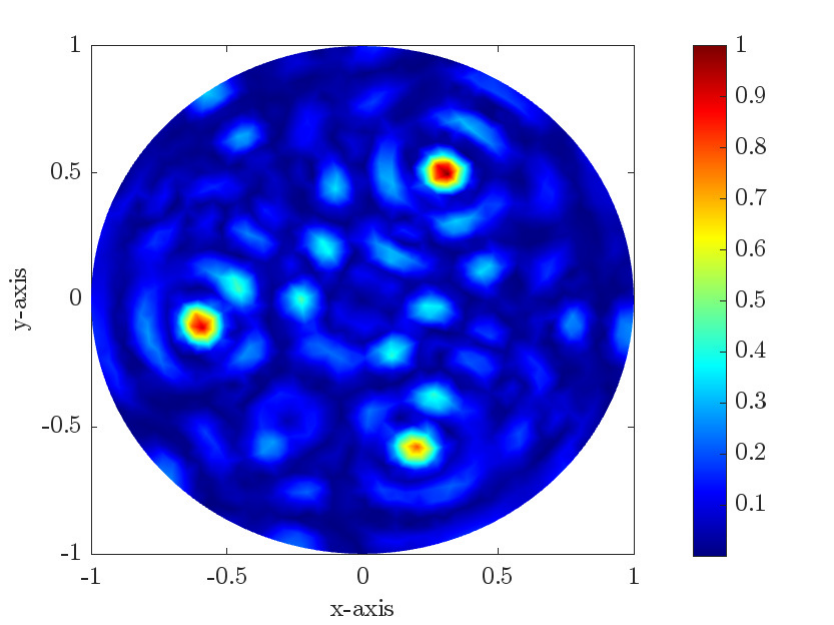}}\hfill
\subfigure[$\epsa=0.6\epsb$]{\includegraphics[width=.33\textwidth]{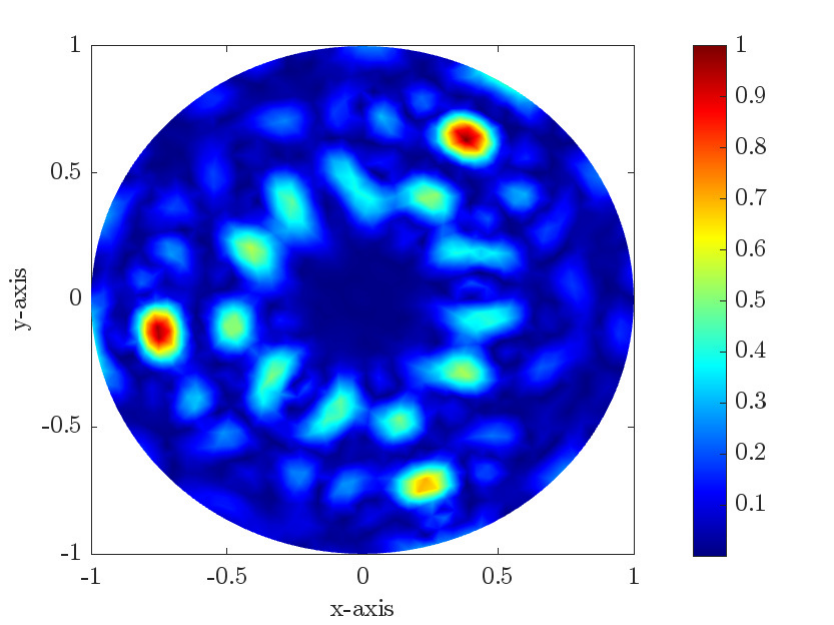}}\hfill
\subfigure[$\epsa=0.8\epsb$]{\includegraphics[width=.33\textwidth]{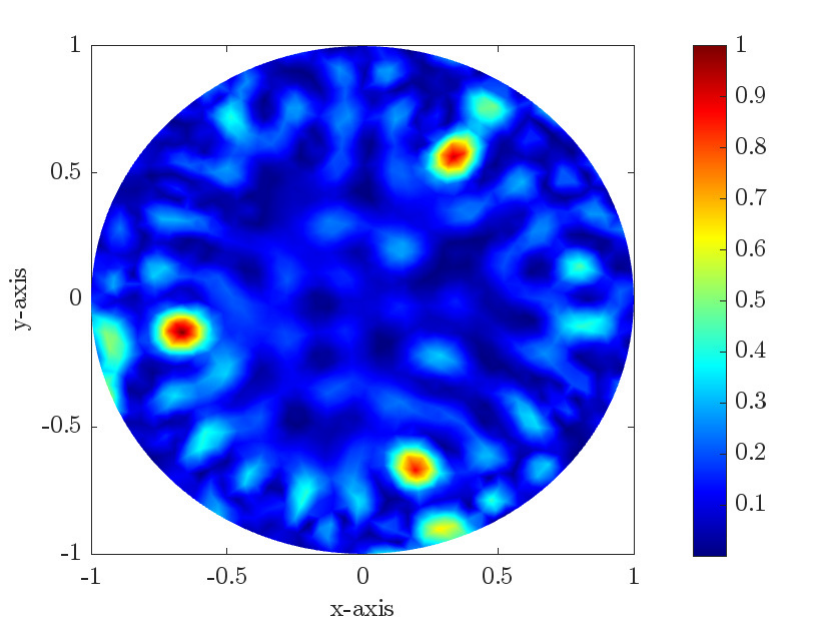}}\\
\subfigure[$\epsa=1.2\epsb$]{\includegraphics[width=.33\textwidth]{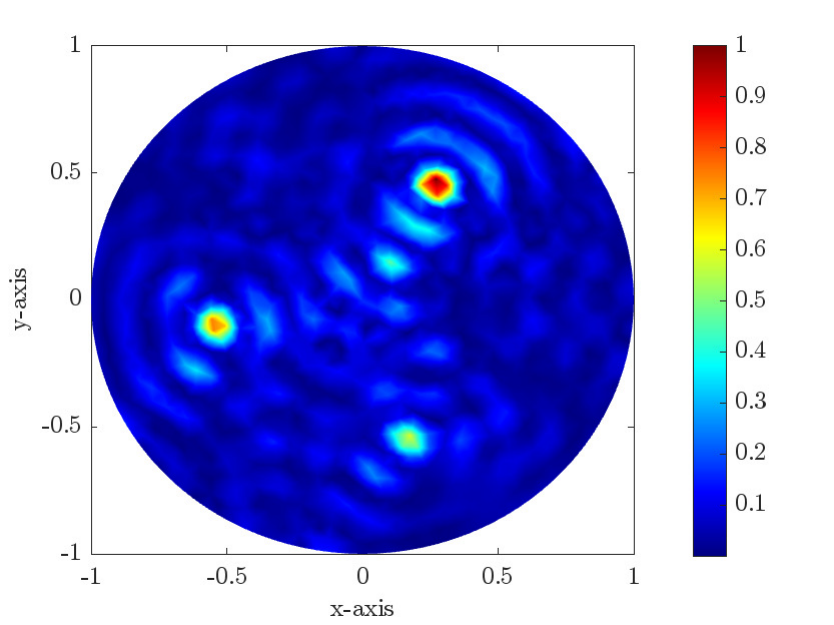}}\hfill
\subfigure[$\epsa=1.5\epsb$]{\includegraphics[width=.33\textwidth]{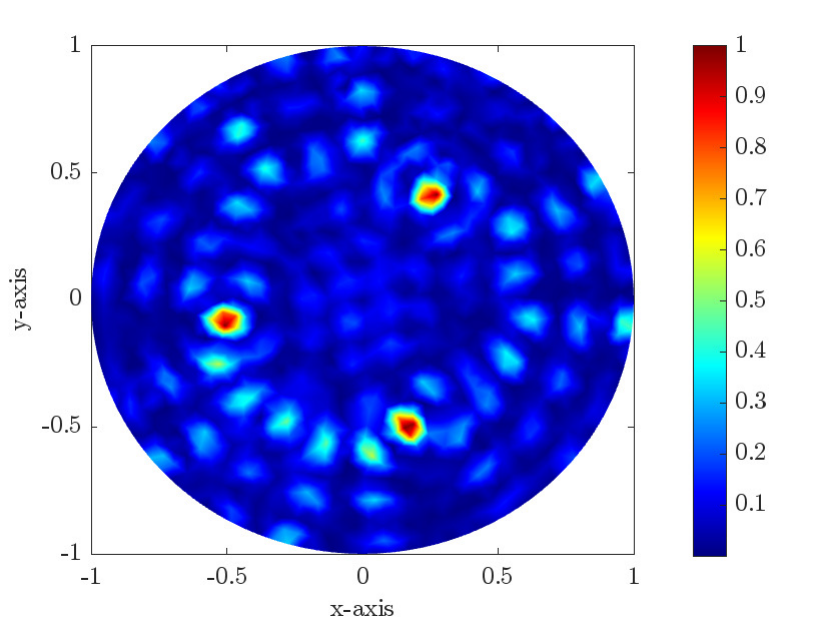}}\hfill
\subfigure[$\epsa=2.0\epsb$]{\includegraphics[width=.33\textwidth]{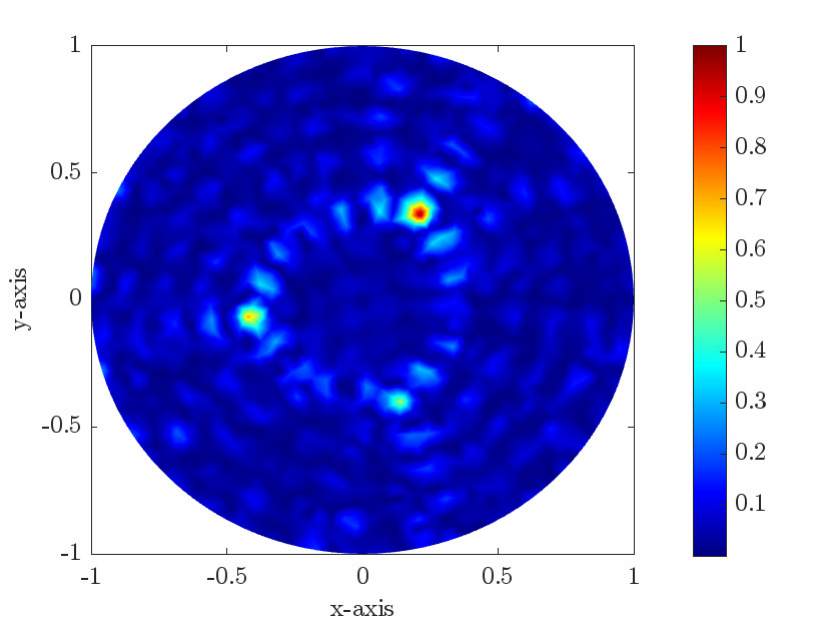}}
\caption{\label{Result3}(Example \ref{Ex3}) Maps of $\mathfrak{F}(\mz;\ka)$ with various $\epsa$ at $f=\SI{875}{\mega\hertz}$.}
\end{center}
\end{figure}

\begin{example}[Three cracks with difference lengths]\label{Ex4}
For the final example, let us consider the identification of three cracks with difference lengths:
\begin{align*}
\Gamma_1&=\set{(s-0.6,-0.1):-0.02\leq s\leq0.02},\quad&\mx_1&=(-0.6,-0.1)\\
\Gamma_2&=\set{\mathcal{R}_{\SI{45}{\degree}}(s+0.3,s+0.5):-0.06\leq s\leq0.06},\quad&\mx_2&=(0.3,0.5)\\
\Gamma_3&=\set{\mathcal{R}_{\SI{210}{\degree}}(s+0.2,s-0.6):-0.01\leq s\leq0.01}\quad&\mx_3&=(0.2,-0.6).
\end{align*}
Based on the imaging results in Figure \ref{Result4}, we can observe similar phenomena in Example \ref{Ex3}. Independently of this, it is observed that $\Gamma_3$ is shorter than $\Gamma_1$ and $\Gamma_2$, resulting in the value of $\mathfrak{F}(\mz;\ka)$ at $\mx_3$ being significantly smaller than at $\mx_1$ or $\mx_2$.
\end{example}

\begin{figure}[h]
\begin{center}
\subfigure[$\epsa=\epsb$]{\includegraphics[width=.33\textwidth]{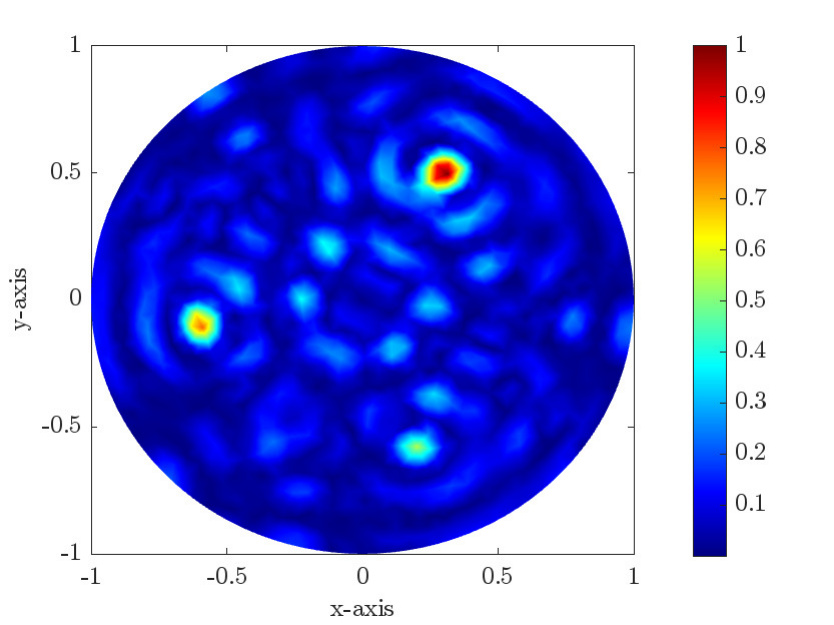}}\hfill
\subfigure[$\epsa=0.6\epsb$]{\includegraphics[width=.33\textwidth]{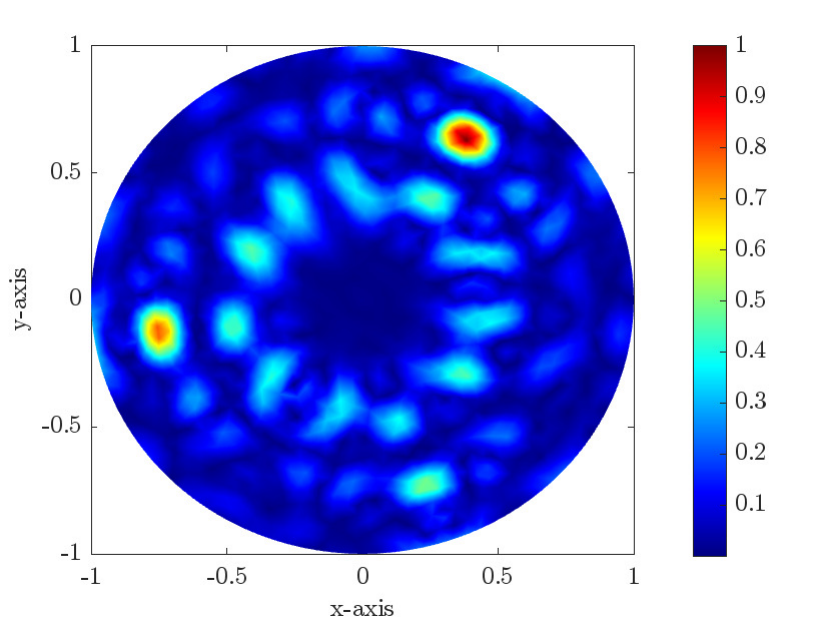}}\hfill
\subfigure[$\epsa=0.8\epsb$]{\includegraphics[width=.33\textwidth]{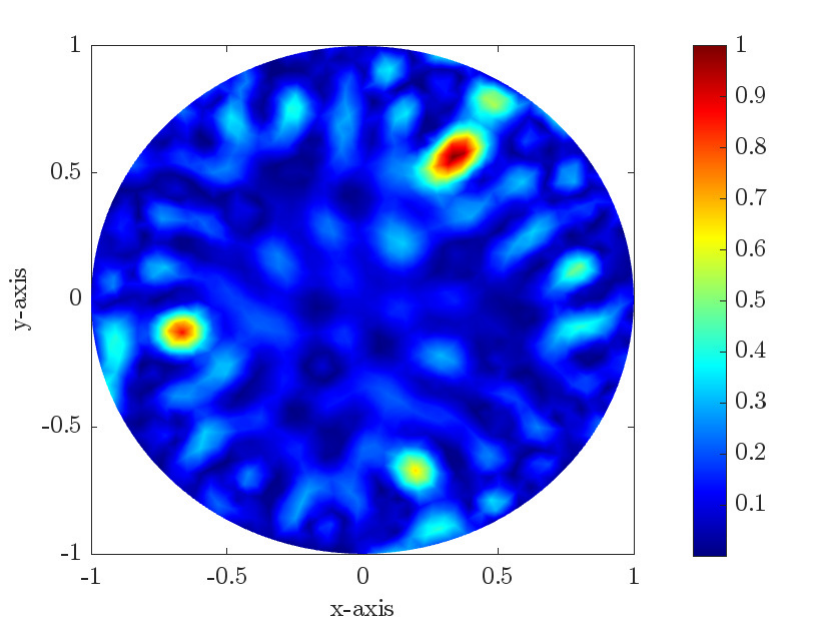}}\\
\subfigure[$\epsa=1.2\epsb$]{\includegraphics[width=.33\textwidth]{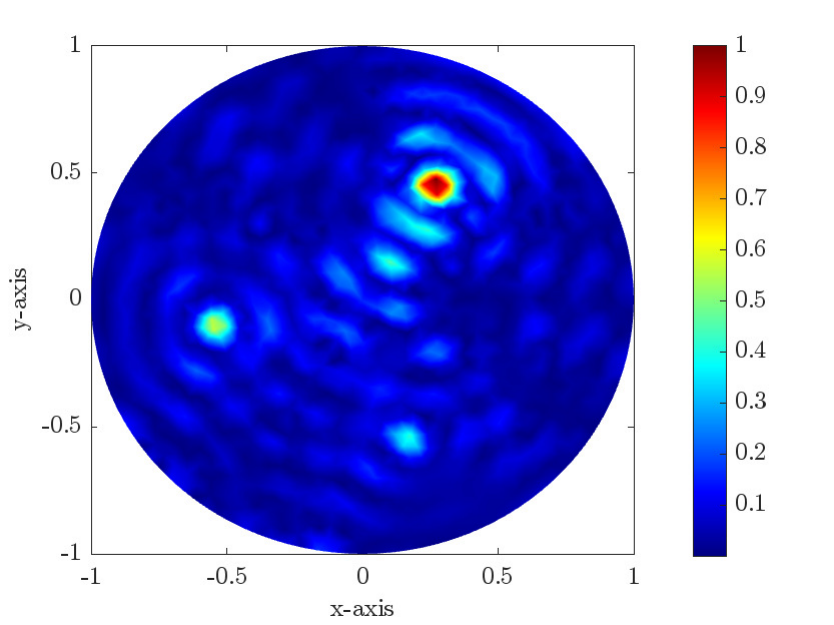}}\hfill
\subfigure[$\epsa=1.5\epsb$]{\includegraphics[width=.33\textwidth]{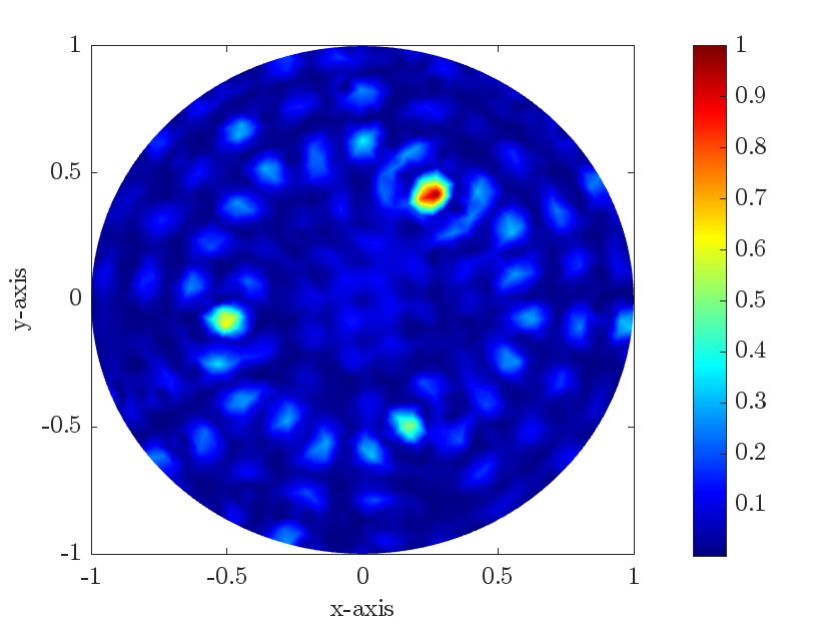}}\hfill
\subfigure[$\epsa=2.0\epsb$]{\includegraphics[width=.33\textwidth]{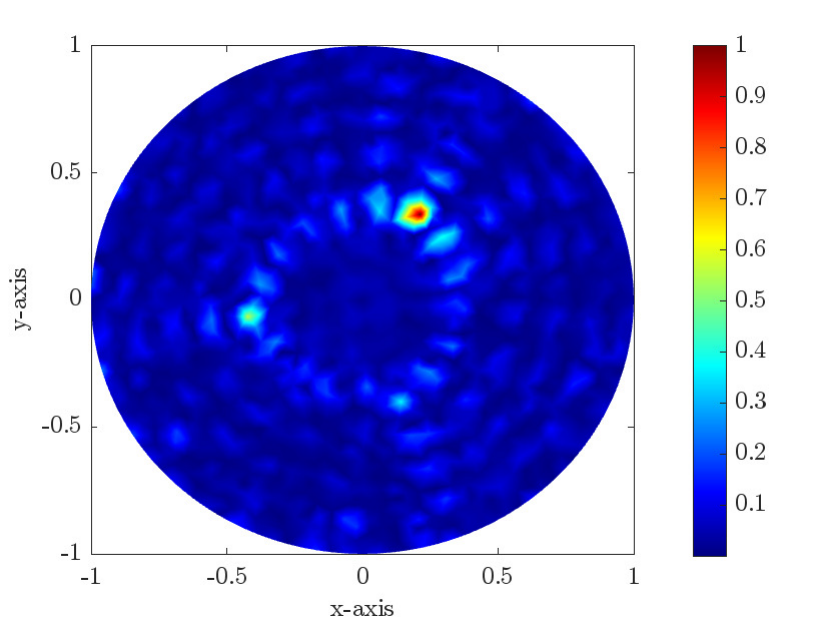}}
\caption{\label{Result4}(Example \ref{Ex4}) Maps of $\mathfrak{F}(\mz;\ka)$ with various $\epsa$ at $f=\SI{875}{\mega\hertz}$.}
\end{center}
\end{figure}

\section{Conclusion}\label{sec:5}
We investigated the applicability of topological derivative based imaging technique for a fast identification of a set of small, linear perfectly conducting cracks completely embedded in a homogeneous domain with smooth boundary when accurate values of background permittivity and/or permeability are unknown. Based on the asymptotic expansion formula for the far-field pattern in the presence of cracks, we rigorously showed that the imaging function of topological derivative can be expressed by the zero-order Bessel function and length of crack. Based on the discovered structure of the imaging function, we confirmed why inaccurate location of cracks was reconstructed when inaccurate value of background permittivity and/or permeability. To support the theoretical results, various numerical simulation results corrupted by random noise were exhibited.

In order to reconstruct cracks accurately, investigation of imaging algorithm for retrieving exact value of background permittivity and/or permeability would be a remarkable contribution to this study. In this paper, we considered an identification of two-dimensional cracks embedded in a homogeneous domain. Extension to the three-dimensional problem and real-world applications will be important research subjects.

\providecommand{\href}[2]{#2}
\providecommand{\arxiv}[1]{\href{http://arxiv.org/abs/#1}{arXiv:#1}}
\providecommand{\url}[1]{\texttt{#1}}
\providecommand{\urlprefix}{URL }

\end{document}